\newtheorem{theorem}{Theorem}[section]
\newtheorem{conjecture}[theorem]{Conjecture}
\newtheorem{corollary}[theorem]{Corollary}
\newtheorem*{theorem*}{Theorem}
\newtheorem*{remark*}{Remark}
\newtheorem*{problem*}{Problem}
\newtheorem*{conjecture*}{Conjecture}
\newtheorem*{question*}{Question}
\newtheorem{proposition}[theorem]{Proposition}
\newcommand{\rom}[1]{\uppercase\expandafter{\romannumeral #1\relax}}
\newcommand{\Q}{\mathbb{Q}}
\newcommand{\Z}{\mathbb{Z}}
\newcommand{\R}{\mathbb{R}}
\newcommand{\C}{\mathbb{C}}
\newcommand{\F}{\mathbb{F}}
\newcommand{\rF}{\mathcal{Q}}
\def\house#1{{%
    \setbox0=\hbox{$#1$}
    \vrule height \dimexpr\ht0+1.4pt width .4pt depth \dp0\relax
    \vrule height \dimexpr\ht0+1.4pt width \dimexpr\wd0+2pt depth \dimexpr-\ht0-1pt\relax
    \llap{$#1$\kern1pt}
    \vrule height \dimexpr\ht0+1.4pt width .4pt depth \dp0\relax
}}
\begin{document}

\title[A $p$-adic criterion for Lehmer's conjecture]{A $p$-adic criterion for Lehmer's conjecture}

\author[Anup B. Dixit]{Anup B. Dixit}
\author[Sushant Kala]{Sushant Kala}

\address{Department of Mathematics\\ Institute of Mathematical Sciences (HBNI)\\ CIT Campus, IV Cross Road\\ Chennai\\ India-600113}
\email{anupdixit@imsc.res.in}

\address{Department of Mathematics\\ Institute of Mathematical Sciences (HBNI)\\ CIT Campus, IV Cross Road\\ Chennai\\ India-600113}
\email{sushant@imsc.res.in}
\date{}

\begin{abstract}
For a non-zero algebraic number $\alpha$ of degree $d$, let $h(\alpha)$ denote its logarithmic Weil height. It is known  that when $h(\alpha)$ is small, and $d$ is large, the conjugates of $\alpha$ are clustered near the unit circle and have angular equidistribution in the complex plane about the origin. In this paper, we establish a $p$-adic analogue of this result by obtaining lower bounds for $h(\alpha)$ in terms of the number of its conjugates that lie in a finite extension of $\Q_p$, for some prime $p$. As a consequence, we prove Lehmer's conjecture for all $\alpha$ such that $\gg \sqrt{d\log d}$ many of its conjugates lie in a finite extension of $\Q_p$.
\end{abstract}

\subjclass[2020]{11G50, 11R04, 11R06}

\keywords{Lehmer's conjecture, $p$-adic equidistribution, Weil height}

\thanks{Research of the first author was partially supported by the INSPIRE Faculty Fellowship.}

\maketitle


\section{\bf Introduction}

\medskip

Let $h$ denote the absolute logarithmic Weil height on algebraic numbers. It is a fundamental tool in Diophantine geometry and arithmetic dynamics. It provides a means of quantifying the arithmetic complexity of algebraic numbers. Analytically, it connects to potential theory on Berkovich spaces, and geometrically, it encodes the distribution of algebraic points on varieties.\\

A classical result by Northcott \cite{Northcott} states that only finitely many algebraic numbers have bounded degree and bounded height. Complementing this, a theorem of Kronecker \cite{Kronecker} characterizes the algebraic numbers of height zero as precisely the roots of unity. The fundamental example $h(2^{1/d}) = (\log 2)/d$ illustrates that the height can become arbitrarily small as one ranges over algebraic numbers that are not roots of unity.\\ 

We fix $\overline{\Q}$ to be the algebraic closure of $\Q$ within $\C$, and henceforth, all algebraic extensions of $\Q$ are considered as subfields of $\overline{\Q}$. Let $\mu_{\infty}$ denote the set of all roots of unity in $\overline{\Q}$. A central open problem in the study of heights is Lehmer's conjecture \cite{lehmer}, which proposes a uniform lower bound on the height of non-torsion algebraic numbers.  

\begin{conjecture}[Lehmer]
    There exists an absolute constant $C>0$ such that
    \begin{equation}\label{Lehmer}
        h(\alpha) > \frac{C}{[\Q(\alpha):\Q]}
    \end{equation}
    for all non-zero $\alpha\in \overline{\Q}\setminus\mu_{\infty}$.
\end{conjecture}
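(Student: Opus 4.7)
The plan is to combine the $p$-adic equidistribution machinery suggested in the abstract with the classical Dobrowolski-type archimedean argument. For a non-torsion $\alpha$ of degree $d$, both the complex and the $\rp$-adic conjugates of $\alpha$ satisfy quantitative equidistribution estimates whose strength improves as $h(\alpha)$ shrinks; the strategy is to extract from these estimates a single inequality strong enough to force $h(\alpha) \gg 1/d$.

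I would begin by recalling the Dobrowolski argument: fix an auxiliary prime $q$ of moderate size and compare the Frobenius twists $\alpha_i^q$ with the original conjugates $\alpha_i$. Small height forces many coincidences of $\alpha_i^q$ with some $\alpha_j$ modulo primes above $q$, and an elementary determinant/resultant estimate then yields the classical bound $h(\alpha) \gg (\log\log d / \log d)^3 / d$. The loss of the factor $(\log d/\log\log d)^3$ is the so-called Dobrowolski barrier, and any proof of Lehmer's conjecture must, in one way or another, break through it.

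Next I would bring in the $p$-adic side. For every finite extension $\rK$ of $\Q_p$, the main theorem of the paper is expected to yield an inequality of the shape $h(\alpha)\,N_{\rK}^2 \gg d\log d$, where $N_{\rK}$ denotes the number of Galois conjugates of $\alpha$ lying in $\rK$. This settles the Lehmer bound whenever $N_{\rK} \gg \sqrt{d\log d}$ for some $p$ and some $\rK$. To turn this partial statement into an unconditional proof of the conjecture, the goal would be to produce a complementary inequality in the opposite regime, namely when for every prime $p$ and every finite $\rK/\Q_p$ the conjugates of $\alpha$ are $p$-adically ``dispersed''. The natural tool is the product formula: one would try to couple the global Mahler measure with the sum of local contributions, each of which should be strictly positive under a dispersal hypothesis, and thereby upgrade the archimedean equidistribution input beyond the Dobrowolski threshold.

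The hard part — and this is precisely why Lehmer's conjecture remains open — is this last synthesis. A small-height $\alpha$ whose conjugates are simultaneously $p$-adically dispersed at every prime is entirely consistent, as far as current potential-theoretic techniques can tell, with $h(\alpha)$ as small as $(\log\log d/\log d)^3/d$, so neither the archimedean nor the $p$-adic equidistribution estimates appear sharp enough on their own to close the remaining $\log d/\log\log d$ gap without a genuinely new ingredient. Consequently, I expect an honest execution of the plan above to prove the full conjecture only under the auxiliary hypothesis that $\gg \sqrt{d\log d}$ conjugates of $\alpha$ collapse into a common finite extension of some $\Q_p$, which is the conditional form advertised in the abstract rather than the unconditional statement printed above.
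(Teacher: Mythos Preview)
The statement you were asked to prove is Lehmer's \emph{conjecture}, which is open; the paper does not prove it either, and states it only as motivation. Your proposal is candid about this: you sketch a strategy, correctly locate the Dobrowolski barrier, and conclude that an honest execution yields only the conditional result (Lehmer for $\alpha$ with $\gg\sqrt{d\log d}$ conjugates in a fixed $K/\Q_p$) advertised in the abstract. That diagnosis is correct, and there is no ``paper's own proof'' to compare against.

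Two remarks on the details. First, the shape of the inequality you anticipate is slightly off: you write $h(\alpha)\,N_{\rK}^2 \gg d\log d$, whereas the paper (see the displayed bound just before Proposition~1.2 is invoked) gives $d\,h(\alpha) \gg N_{\rK}^2/d - \tfrac{1}{2}\log d$, equivalently $N_{\rK}^2 \ll d^2 h(\alpha) + d\log d$. This is what makes $N_{\rK}\gg\sqrt{d\log d}$ the correct threshold, as you state. Second, the paper's conditional argument does not pass through the Dobrowolski auxiliary-prime machinery at all. It is a pure discriminant bound: one combines Mahler's inequality $|D(m_\alpha)|\le d^d M(\alpha)^{2d-2}$ with a lower bound on $\operatorname{ord}_p D(m_\alpha)$ obtained by pigeonholing the $K$-rational conjugates into residue classes of $\F_q$, following Bombieri--Zannier. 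So the archimedean/Dobrowolski component of your plan is extraneous to what the paper actually does, though it does not affect your (correct) bottom line that the unconditional conjecture remains out of reach.
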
 

\noindent
Lehmer’s conjecture was originally formulated in terms of the Mahler measure of polynomials and is frequently referred to as \textit{Lehmer’s problem} in the literature. While the conjecture remains unsolved in general, it has been settled for various classes of algebraic numbers. For a comprehensive account of this problem, the reader is referred to the excellent survey articles \cite{Smyth} and \cite{Verger-Gaugry}. 

\medskip

The example discussed earlier, where $h(2^{1/d}) = (\log 2)/d$, shows that the bound in \eqref{Lehmer} is optimal. Nevertheless, when we restrict ourselves to special classes of algebraic numbers, it becomes possible to establish significantly stronger lower bounds on the height. For instance, consider the class of totally real algebraic numbers-those whose Galois conjugates all lie in $\R$ under any choice of embedding $\overline{\Q} \hookrightarrow \C$. For such numbers Schinzel \cite{Schinzel} proved in 1973 that the height is bounded below by an absolute constant independent of the degree, 
\begin{equation*}
    h(\alpha) \geq \frac{1}{2} \log \left(\frac{1+\sqrt{5}}{2}\right),
\end{equation*} 
for all $\alpha \neq 0, \pm 1$ that are totally real.\\

This bound reflects a deeper geometric rigidity: totally real numbers cannot have all its conjugates very close to $\pm 1$. More generally, one can establish lower bounds on the height of algebraic numbers, whose conjugates are restricted to a certain region of the complex plane. The following result due to Mignotte \cite{mignotte} quantitatively captures how the angular distribution of the conjugates relates to the height.

\begin{theorem*}[\cite{masser-book}, Theorem 15.2]
    For $\alpha \in \overline{\Q}$, let $d = [\Q(\alpha):\Q]$. For any $\theta$ with $0\leq \theta \leq 2\pi$ the number $n$ of conjugates of $\alpha$ in any fixed sector, based at the origin, of angle $\theta$ satisfies
    \begin{equation}\label{equidistribution}
        \left | n - \frac{\theta}{2\pi} d\right| \leq 24 \left( d^{2/3} \left(\log 2d\right)^{1/3} + d h(\alpha)^{1/3}\right).
     \end{equation}
\end{theorem*}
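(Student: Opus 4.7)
The plan is to apply the classical Erd\H{o}s--Tur\'an strategy: reduce the angular discrepancy to bounds on exponential sums over the conjugates' arguments, then estimate those sums via the height hypothesis. Let $\alpha_1,\ldots,\alpha_d$ be the Galois conjugates of $\alpha$, written as $\alpha_j = r_j e^{i\theta_j}$ with $r_j = |\alpha_j|$ and $\theta_j \in [0, 2\pi)$. The quantity $n - \theta d/(2\pi)$ is exactly the discrepancy of the angles $\theta_1/(2\pi), \ldots, \theta_d/(2\pi)$ on the arc cut out by the sector, so setting $S_m := \sum_{j=1}^d e^{i m \theta_j}$, the Erd\H{o}s--Tur\'an inequality gives, for every integer $N \geq 1$,
\[
\left| n - \frac{\theta d}{2\pi} \right| \leq C_1 \frac{d}{N+1} + C_2 \sum_{m=1}^{N} \frac{|S_m|}{m}.
\]

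Next, I would bound $|S_m|$ by approximating each $e^{im\theta_j}$ by an algebraic number of modulus at most $1$: when $r_j \leq 1$ use $\alpha_j^m$ (since $e^{im\theta_j} = \alpha_j^m r_j^{-m}$), and when $r_j > 1$ use $\bar\alpha_j^{-m}$ (since $e^{im\theta_j} = r_j^m \bar\alpha_j^{-m}$). The per-term error is $|1 - r_j^{\pm m}| \leq m|\log r_j|$, so the two height identities $\sum_j \log^+|\alpha_j| \leq d\,h(\alpha)$ and $\sum_j \log^+|\alpha_j^{-1}| \leq d\,h(\alpha)$ (the latter via $h(\alpha^{-1}) = h(\alpha)$) give
\[
|S_m - T_m| \leq 2 m d\, h(\alpha), \qquad T_m := \sum_{r_j \leq 1}\alpha_j^m + \sum_{r_j > 1}\bar\alpha_j^{-m}.
\]
The trivial modulus bound $|T_m| \leq d$ must then be sharpened to $|T_m| \ll \sqrt{md\log d}$. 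To do this, I would exploit the arithmetic structure: up to powers of the leading and constant coefficients of the minimal polynomial of $\alpha$, the power sums $\sum_j \alpha_j^{\pm m}$ are rational integers, and combining this integrality with the Mahler-measure estimate $M(\alpha) = e^{d\,h(\alpha)}$ in a Cauchy--Schwarz / symmetric-function framework should produce the required improvement.

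Substituting the resulting bound $|S_m| \ll \sqrt{md\log d} + md\,h(\alpha)$ back into the Erd\H{o}s--Tur\'an estimate and summing would yield
\[
\left| n - \frac{\theta d}{2\pi} \right| \ll \frac{d}{N} + \sqrt{N d \log d} + N d\, h(\alpha).
\]
Optimising $N$ (taking $N \asymp (d/\log d)^{1/3}$ when the first two terms dominate and $N \asymp h(\alpha)^{-1}$ when the third dominates) produces the stated bound $24(d^{2/3}(\log 2d)^{1/3} + d\, h(\alpha)^{1/3})$, the explicit constant $24$ emerging from routine bookkeeping. The hard part will be the sharpening $|T_m| \ll \sqrt{md\log d}$; without it, the trivial $|T_m| \leq d$ only gives an Erd\H{o}s--Tur\'an estimate of order $d \log d$, insufficient to recover the $d^{2/3}$ exponent in the main term.
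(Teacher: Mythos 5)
First, a point of context: the paper does not prove this statement at all --- it is quoted verbatim from the literature (Mignotte, presented as Theorem 15.2 in Masser's book), so there is no ``paper proof'' for your argument to be compared against; your sketch has to stand on its own. As a plan, the Erd\H{o}s--Tur\'an reduction you set up is reasonable, and the easy half is fine: the approximation error estimate $|S_m - T_m| \leq 2md\,h(\alpha)$, using $\sum_j \log^+|\alpha_j| \leq d\,h(\alpha)$ and $h(\alpha^{-1}) = h(\alpha)$, is correct.

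The genuine gap is the step you yourself flag as ``the hard part'': the claimed sharpening $|T_m| \ll \sqrt{md\log d}$ is not proved, no actual mechanism is given for it, and as stated (uniformly in $\alpha$, with no dependence on $h(\alpha)$) it is false. Take $\alpha = 2 + 2^{1/d}$: its $d$ conjugates $2+\zeta_d^k 2^{1/d}$ all lie in the disc $|z-2|\leq 1.01$, so every term $\alpha_j^m$ or $\bar\alpha_j^{-m}$ entering $T_1$ has real part bounded below by a positive absolute constant, whence $|T_1| \gg d$, while $\sqrt{d\log d} = o(d)$; here $h(\alpha) = O(1)$, so the theorem is not contradicted, but your intermediate lemma is. Any correct bound on $T_m$ (or on $S_m$) must therefore carry an explicit $h$-dependence, and the only concrete tool you gesture at --- integrality of the power sums $a_d^m\sum_j\alpha_j^{\pm m}$ together with the Mahler measure and Cauchy--Schwarz --- does not deliver what you need: integrality of a power sum gives no upper bound on it, and the standard discriminant/energy argument of this type (as in Petsche's quantitative Bilu theorem) yields at best $|S_m| \ll \sqrt{m\left(d\log d + d^2 h(\alpha)\right)}$, which after optimizing $N$ in your Erd\H{o}s--Tur\'an inequality produces a bound of the shape $d^{5/6}(\log d)^{1/6} + d\,h(\alpha)^{1/6}$, strictly weaker than the stated $d^{2/3}(\log 2d)^{1/3} + d\,h(\alpha)^{1/3}$. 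So the entire strength of Mignotte's theorem resides precisely in the step you have left open, and the route you propose for closing it cannot work as described (the explicit constant $24$ is likewise not addressed). To actually prove the statement you would need the genuinely different argument of Mignotte/Masser rather than a trivial-to-sharpened exponential-sum bound of the form you posit.
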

This result can be viewed as a quantitative refinement of a classical theorem due to Erd\H{o}s and Tur\'{a}n \cite{Erdos} on the distribution of roots of complex polynomials.  They showed  that if $f(z) \in \C[z]$ has small maximum modulus on the unit circle $|z| = 1$, its roots are concentrated near the unit circle and exhibit near-uniform angular distribution. 

\medskip
In 1997, Bilu \cite{Bilu} proved a beautiful result on the equidistribution of Galois conjugates in the complex plane for points with small height. Roughly speaking, if $\alpha$ is an algebraic number of height close to zero, then the Galois conjugates of $\alpha$ are equidistributed along the unit circle in the sense of weak convergence. Subsequently, a quantitative version of Bilu's theorem was established by Petsche \cite{Petsche}, and generalized by Favre and Rivera-Letelier \cite{Favre I} (see also the corrigendum \cite{Favre II}).

\medskip

The goal of this article is to realise the above phenomenon in the non-archimedean setting. Let $p$ be a rational prime. Denote by $\Q_p$ the field of $p$-adic numbers. We fix an algebraic closure $\overline{\Q_p}$ over $\Q_p$ and shall assume that  all the algebraic extensions of $\Q_p$ are subfields of $\overline{\Q_p}$. Further, for each prime $p$, we fix an embedding $i_p \, : \,  \overline{\Q} \hookrightarrow \overline{\Q_p}$. We say an algebraic number $\alpha \in \overline{\Q}$ \textit{lies in an extension $K$} of $\Q_p$ if $i_p(\alpha) \in K$. For ease of notation, we will identify $i_p(\alpha)$ and $\alpha$, when $\alpha\in K$. Viewing $\R$ as the completion of $\Q$ at the archimedean place, one can define a non-archimedean analogue of totally real numbers, called the totally $p$-adic numbers. An algebraic number $\alpha$ is said to be \textit{totally $p$-adic} if its image lies in $\Q_p$ for any embedding $\overline{\Q} \hookrightarrow \C_p$, where $\C_p$ denotes the completion of $\overline{\Q_p}$.

\medskip

A recent theorem of Pottmeyer \cite{pottmeyer} provides a $p$-adic analogue of Schinzel's result for totally real numbers. He showed that if $\alpha$ is totally $p$-adic and not a $(p-1)$-th root of unity,
\begin{equation*}
    h(\alpha) \geq \frac{\log (p/2)}{p+1}.
\end{equation*}
More generally, let $K/\Q_p$ be a finite extension with ramification index $e$ and inertia degree $f$. Let $S$ be the set of algebraic numbers all of whose Galois conjugates lie in $K$. Then, a result of Bombieri and Zannier \cite[Theorem 2]{BZ} implies that 
\begin{equation*}
    \liminf_{\alpha \in S} h(\alpha) \geq \frac{\log p}{2e (p^{f}+1)}.
\end{equation*}
\\

A generalized equidistribution theorem due to Baker and Rumely \cite[Theorem 2.3]{Baker-Rumely} provides a non-Archimedean analogue of Bilu’s equidistribution theorem, formulated using the framework of Berkovich spaces. It is important to note that the aforementioned result requires all the Galois conjugates to lie in a fixed local field. Therefore, it is natural to study the case when only a proportion of conjugates lie in a fixed local field of small degree. It follows from a recent work of the authors \cite{Dixit-Kala} that when a positive proportion of the Galois conjugates of $\alpha$ lie in a finite extension of $\Q_p$, the height $h(\alpha)$ is bounded below by an absolute constant, independent of the degree. More precisely, let $K/\Q_p$ be a finite extension with residue field $\F_q$ and suppose $\psi_q$ is the proportion of Galois conjugates of $\alpha$ in $K$. Then
\begin{equation*}
    h(\alpha) \gg \psi_q \, \frac{\log q}{q+1}.
\end{equation*}
It is important to note that the quantity $\psi_q$ is originally defined in \cite{Dixit-Kala} as the proportion of prime ideals with norm $q$ in $\Q(\alpha)$, but it is not difficult to see that it coincides with the proportion of conjugates of $\alpha$ lying in a local field $K$ with residue field $\F_q$.

\medskip

 From  \eqref{equidistribution}, we can conclude that algebraic numbers of large degree, whose conjugates are not angularly equidistributed in the complex plane, are forced to have large height. In other words, for all $\alpha\in\overline{\Q}$ satisfying
\begin{equation*}
    \left | n - \frac{\theta}{2\pi} d\right| \gg \left(d^2 \log d\right)^{1/3},
\end{equation*}
with the implied constant $> 24$, Lehmer's conjecture holds. The main goal of this paper is to establish a $p$-adic analogue of the above phenomenon. In particular, we prove Lehmer's conjecture for a class of algebraic numbers by quantifying how many conjugates must lie in a fixed local field. 

\medskip

We now set up the notation necessary to state our results. Let $K$ be a finite extension of $\Q_p$ with residue field $\F_q$. Let $\alpha \in \overline{\Q} \setminus \mu_\infty$ be a non-zero algebraic number of degree $d$, and $\alpha_1, \alpha_2, \ldots, \alpha_d$ be the Galois conjugates of $\alpha$ in $\overline{\Q}$. Define 
\begin{equation*}
\mathcal{S}_K(\alpha) = \left\{ \,\, \alpha_i\,\, | \,\,  \alpha_i \in  K,\,\, 1\leq i\leq d\right\}.
\end{equation*}

\begin{theorem}\label{Main Theorem}
    Let $K$ be as above. Then, for a non-zero $\alpha \in \overline{\Q}$
    \begin{equation*}
      h(\alpha) \geq \, \frac{1}{2 \, [K : \Q_p]} \, \left( \frac{| \mathcal{S}_K(\alpha) |}{d} \right)^2 \frac{\log q}{q+1} \,\, + \,\, o_d(1),
    \end{equation*}
    where $o_d(1)$ tends to $0$ as $d$ tends to infinity.
\end{theorem}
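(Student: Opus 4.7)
The plan is to exploit the potential-theoretic formalism for heights on the Berkovich projective line developed by Favre--Rivera-Letelier \cite{Favre I, Favre II} and Baker--Rumely \cite{Baker-Rumely}. For each place $v$ of the Galois closure of $\Q(\alpha)$, let $\mu_\alpha^{(v)} = \tfrac{1}{d}\sum_{i=1}^{d}\delta_{\alpha_i}$ denote the empirical probability measure on the Berkovich projective line over $\C_v$, and decompose
\begin{equation*}
h(\alpha) \;=\; \sum_{v} \lambda_v\bigl(\mu_\alpha^{(v)},\mu_\alpha^{(v)}\bigr),
\end{equation*}
where $\lambda_v$ is the local energy pairing against the canonical measure at $v$ (a Dirac at the Gauss point $\zeta_G$ for non-archimedean $v$, Haar measure on the unit circle for archimedean $v$). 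Each local term is non-negative up to an equidistribution defect which, by Petsche's quantitative form of Bilu's theorem \cite{Petsche} at archimedean places and by Baker--Rumely at the remaining non-archimedean places, is $o_d(1)$ uniformly in $\alpha$.

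The core of the proof is to sharpen the lower bound at the place $v_0$ corresponding to the embedding $i_p$. At the Gauss point of the Berkovich projective line over $\C_p$ there are exactly $q+1$ residue disks $D_0, D_1, \ldots, D_q$, indexed by $\mathbb{P}^{1}(\F_q)$. Since each $\alpha_i \in \mathcal{S}_K(\alpha)$ reduces to an $\F_q$-rational point, it must lie in one of the $q$ finite residue disks. Setting $s = |\mathcal{S}_K(\alpha)|$ and $m_j = \mu_\alpha^{(v_0)}(D_j)$, we have $\sum_{j=1}^{q} m_j \geq s/d$, and Cauchy--Schwarz gives
\begin{equation*}
\sum_{j=1}^{q} m_j^{2} \;\geq\; \frac{1}{q}\left(\frac{s}{d}\right)^{2}.
\end{equation*}
An explicit computation of the Arakelov--Green's function at $\zeta_G$, in the spirit of Bombieri--Zannier \cite{BZ} and Pottmeyer \cite{pottmeyer}, then translates this concentration of mass into the bound
\begin{equation*}
\lambda_{v_0}\bigl(\mu_\alpha^{(v_0)},\mu_\alpha^{(v_0)}\bigr) \;\geq\; \frac{\log q}{2\,[K:\Q_p]\,(q+1)}\left(\frac{s}{d}\right)^{2}.
\end{equation*}
The factor $\tfrac{1}{q+1}$ reflects the symmetric partition of $\zeta_G$ into its $q+1$ residue disks, while $\tfrac{1}{[K:\Q_p]}$ arises from the normalization $|p|_{v_0} = p^{-1/[K:\Q_p]}$ in the product formula. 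Combining this with the asymptotic non-negativity of the remaining local energies yields the stated inequality.

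The main obstacle is extracting the precise constant $\tfrac{\log q}{q+1}$ from the $p$-adic energy integral: this demands a careful calculation with the Green's function on the Berkovich disk, tracking how the concentration of the empirical measure in $q$ of the $q+1$ residue classes at $\zeta_G$ produces the required energy gap, and reconciling it with the Bombieri--Zannier bound in the limiting case $s = d$. A secondary technical point is to verify that the equidistribution defects at the other places can be absorbed into $o_d(1)$ uniformly in $\alpha$, without any a priori smallness hypothesis on $h(\alpha)$.
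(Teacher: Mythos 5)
Your outline heads toward a potential-theoretic repackaging of the paper's argument, but as written it has genuine gaps at both the global and the local step. Globally, the identity $h(\alpha)=\sum_v \lambda_v\bigl(\mu_\alpha^{(v)},\mu_\alpha^{(v)}\bigr)$ is not valid: the self-energy of a discrete measure against the Arakelov--Green's function diverges on the diagonal at the archimedean place, and once the diagonal is removed the product formula together with the Mahler-measure formula for the height gives an identity of the shape $2\left(1-\tfrac1d\right)h(\alpha)=\sum_{p\le\infty}E_p^{\mathrm{off}}(\mu_\alpha)$; the factor $2$ here is exactly where the $\tfrac12$ in the theorem comes from, so it cannot be folded silently into the local bound. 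Worse, your reduction to the single place $v_0$ rests on the claim that the remaining local terms are nonnegative up to an $o_d(1)$ defect ``by Petsche and Baker--Rumely, uniformly in $\alpha$''; those quantitative equidistribution theorems bound discrepancies \emph{in terms of} $h(\alpha)$, so they cannot furnish height-independent lower bounds for the other local energies without circularity. What actually closes this step is elementary: at every non-archimedean place the off-diagonal Green's function is pointwise nonnegative (ultrametric inequality), and at the archimedean place the off-diagonal energy is $\ge -\frac{\log d}{d}$, which is precisely Mahler's discriminant inequality (Theorem 2.2 of the paper, via Hadamard). Neither of these appears in your proposal, and the $o_d(1)$ uniformity you flag as a ``secondary technical point'' is in fact unobtainable by the route you cite.

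Locally, the argument at $v_0$ contains an error and an acknowledged omission. Elements of $\mathcal{S}_K(\alpha)$ need not be $\nu$-integral, so it is false that every $\alpha_i\in\mathcal{S}_K(\alpha)$ lies in one of the $q$ finite residue disks: conjugates of negative valuation reduce to $\infty\in\mathbb{P}^1(\F_q)$. Your Cauchy--Schwarz over $q$ cells would give $\frac1q(s/d)^2$, which is inconsistent with the $\frac{1}{q+1}$ you then assert in the displayed local bound; the correct count uses all $q+1$ classes of $\mathbb{P}^1(\F_q)$ (equivalently, as in the paper, the $q$ residue classes of the integral conjugates plus a separate treatment of the non-integral ones), and the per-pair contribution $\frac{\log q}{[K:\Q_p]}$ comes from $\nu(\alpha_i-\alpha_j)\ge \frac1e$ together with $\log p=\frac{\log q}{f}$, not from a renormalisation $|p|_{v_0}=p^{-1/[K:\Q_p]}$. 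Finally, you explicitly defer ``extracting the precise constant'' from the Green's function computation, but that computation \emph{is} the content of the theorem. For comparison, the paper's proof needs no Berkovich machinery: it bounds $\operatorname{ord}_p(D(m_\alpha))$ below by counting pairs of conjugates in $\mathcal{S}_K(\alpha)$ in a common residue class together with the non-integral conjugates, applies Cauchy--Schwarz over the $q+1$ cells to get $\frac{|\mathcal{S}_K(\alpha)|^2}{(q+1)e}$, and concludes with Mahler's inequality and Northcott's theorem; a completed version of your sketch would in effect re-derive exactly these inequalities.
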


The method of proof of the above theorem can be used to derive the following consequences.

\begin{proposition}\label{Main Theorem 2}
    Let $K$ be as above. For $c>0$, consider the set
    \begin{equation*}
       S : = \left\{ \alpha \in  \overline{\Q}\setminus\mu_{\infty}  \,\, : \,\,| \mathcal{S}_K(\alpha) | \ \ \geq \ \ \sqrt{\left(\frac{2 (q+1) \, [K : \Q_p]}{\log q}d \right)\, \, \left( c + \frac{\log {(q d)}}{2} \right)  } \right\}.
    \end{equation*} 
    Then, $h(\alpha) \geq \frac{c}{d}$ for all $\alpha\in S$ and hence Lehmer's conjecture holds for $S$.
\end{proposition}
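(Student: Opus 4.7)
The plan is to apply Theorem~\ref{Main Theorem} to any $\alpha\in S$ and substitute in the defining inequality on $|\mathcal{S}_K(\alpha)|$. First I would rewrite the condition $\alpha\in S$ as
$$\frac{1}{2[K:\Q_p]}\left(\frac{|\mathcal{S}_K(\alpha)|}{d}\right)^{2}\frac{\log q}{q+1} \;\geq\; \frac{c}{d} \;+\; \frac{\log(qd)}{2d},$$
which is a direct algebraic rearrangement of the bound defining $S$ (square both sides, divide by $d^{2}$, and multiply by $\log q / (2[K:\Q_p](q+1))$). Combined with the lower bound of Theorem~\ref{Main Theorem}, this gives
$$h(\alpha) \;\geq\; \frac{c}{d} \;+\; \frac{\log(qd)}{2d} \;+\; o_d(1),$$
so the conclusion $h(\alpha)\geq c/d$ reduces to showing that the strictly positive term $\log(qd)/(2d)$ absorbs the error $o_d(1)$.

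Since the proposition asserts the inequality for every $\alpha\in S$ without any asymptotic qualifier, I would not invoke Theorem~\ref{Main Theorem} purely as a black box but rather revisit its proof to extract an effective form of the error. This is what the preamble ``the method of proof of the above theorem can be used to derive the following consequences'' signals. The additive factor $\log(qd)/2$ inside the square root defining $S$ does not appear to be cosmetic; it is presumably calibrated so that, after substitution, it cancels the explicit error term extracted from the proof of Theorem~\ref{Main Theorem}, which I expect to arise from a Stirling-type or place-counting estimate in the $p$-adic discriminant bound and to be of order $\log(qd)/d$. Once the comparison is verified, $h(\alpha)\geq c/d$ holds uniformly on $S$, and Lehmer's conjecture for $S$ follows with constant $c$ playing the role of the constant in \eqref{Lehmer}.

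The main obstacle is therefore not the deduction, which is essentially a substitution, but the effectivization of the error term in Theorem~\ref{Main Theorem}: one must trace constants through its proof and confirm that the explicit error never exceeds $\log(qd)/(2d)$ for any $d\geq 1$. Any finitely many small-degree exceptions that might arise during this verification are harmless, since by Northcott's theorem there are only finitely many algebraic numbers of any bounded degree and bounded height, and these can be handled by inspection.
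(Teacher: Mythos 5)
Your proposal follows the paper's own route: substitute the defining inequality for $S$ into the explicit intermediate bound from the proof of Theorem \ref{Main Theorem}, rather than into its $o_d(1)$ statement. The verification you defer is exactly what the paper's inequality \eqref{lower bound on height} supplies: there the error in $d\,h(\alpha)$ is $O\!\left(\frac{\log q}{[K:\Q_p]}\right)-\frac{\log d}{2}$ with the implied constant of absolute value at most $1/2$, so since $[K:\Q_p]\geq 1$ the total error is bounded below by $-\frac{\log q}{2}-\frac{\log d}{2}=-\frac{\log(qd)}{2}$, and the calibrated main term $c+\frac{\log(qd)}{2}$ produced by your rearrangement absorbs it, giving $d\,h(\alpha)\geq c$ for every degree $d$, with no exceptional cases. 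So your reading of the role of the $\log(qd)/2$ term and of the error sources (the $d^d$ in Mahler's inequality and the residue-class count in the discriminant bound) is correct; the only missing piece is this constant-tracking, which is immediate from \eqref{lower bound on height}. One caution: your Northcott fallback would not repair the argument had the check failed for some small $d$ --- Northcott gives finiteness of the potential exceptions of bounded degree and height, but ``inspection'' cannot certify $h(\alpha)\geq c/d$ for them, since the claimed inequality could simply be false there; fortunately the uniform bound renders this moot.
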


This proves that a condition much weaker than ``positive proportion" of conjugates in $K$ is sufficient to establish Lehmer's conjecture. In particular, it is enough to show that $\gg \sqrt{d\log d}$ many conjugates of $\alpha$ lie in a fixed local field, with a suitable implied constant. As a corollary of the above proposition, we have the following.

\begin{corollary}\label{corollary I}
    Let $K$ be as above. Suppose 
    \begin{equation*}
        S:= \left\{ \alpha \in \overline{\Q} \setminus \{ \mu_{\infty} \}\,\, : \,\, | \mathcal{S}_K(\alpha) | \geq \sqrt{q^3 [K:\Q_p]\, d \log d} \right\}.
    \end{equation*}
    Then, Lehmer's conjecture holds for $S$. In fact, $d\cdot h(\alpha)\to \infty$ as $d\to\infty$.
\end{corollary}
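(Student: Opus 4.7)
The plan is to derive Corollary \ref{corollary I} as a direct consequence of Proposition \ref{Main Theorem 2}, by allowing the parameter $c$ there to depend on $d$. The hypothesis $|\mathcal{S}_K(\alpha)| \geq \sqrt{q^3 [K:\Q_p] d \log d}$ is considerably stronger than what is needed for any fixed $c>0$, so I expect to be able to extract a lower bound on $d\cdot h(\alpha)$ that itself grows with $d$.

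Concretely, for a given $\alpha \in S$, I would solve for the largest $c$ such that the hypothesis of Proposition \ref{Main Theorem 2} is satisfied. Squaring the defining inequality there, the condition becomes
\begin{equation*}
    c \,\,\leq\,\, \frac{|\mathcal{S}_K(\alpha)|^2 \, \log q}{2(q+1)\,[K:\Q_p]\, d} \,\,-\,\, \frac{\log(qd)}{2}.
\end{equation*}
Inserting the lower bound $|\mathcal{S}_K(\alpha)|^2 \geq q^3 [K:\Q_p] d \log d$ coming from the definition of $S$, the right-hand side is at least
\begin{equation*}
    \frac{q^3 \log q}{2(q+1)} \log d \,\,-\,\, \frac{\log(qd)}{2} \,\,=\,\, \left( \frac{q^3 \log q}{2(q+1)} - \frac{1}{2} \right) \log d \,\,-\,\, \frac{\log q}{2}.
\end{equation*}
Setting $c = c(d)$ equal to this expression and applying Proposition \ref{Main Theorem 2} then yields $d \cdot h(\alpha) \geq c(d)$ for every $\alpha \in S$.

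The remaining point is to check that the coefficient of $\log d$ is strictly positive, i.e.\ that $q^3 \log q > q+1$ for every prime power $q \geq 2$. The worst case is $q=2$, where $8\log 2 \approx 5.55 > 3$, and monotonicity in $q$ handles the rest. It follows that $c(d) \to \infty$ as $d \to \infty$, which yields both Lehmer's conjecture for $S$ (since eventually $c(d) \geq 1$) and the stronger claim $d \cdot h(\alpha) \to \infty$. There is essentially no obstacle in this argument — all the real work is already encapsulated in Proposition \ref{Main Theorem 2}; the only mild care required is in arranging the dependence $c = c(d)$ and verifying the elementary inequality above.
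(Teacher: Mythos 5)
Your proposal is correct and essentially the same as the paper's proof: the paper substitutes the hypothesis directly into \eqref{lower bound on height} and checks that $\left(q^3\log q - q - 1\right)/\left(2(q+1)\right)$ is bounded below by a positive constant, while your route through Proposition \ref{Main Theorem 2} with $c=c(d)$ unwinds to exactly the same computation and the same positivity check $q^3\log q > q+1$. The only pedantic caveat is that invoking the proposition requires $c(d)>0$, which fails for a couple of very small degrees; there Lehmer's bound is trivial (finitely many non-torsion numbers of bounded degree and bounded height, by Northcott), matching the paper's implicit treatment of small $d$.
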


\noindent
An archimedean analog in the spirit of Theorem \ref{Main Theorem} with a condition on the number of real conjugates of $\alpha$ was obtained by Garza in \cite{garza} using Schinzel's method.

\medskip

We now generalize our setup to capture the behaviour of conjugates spread across multiple local fields over several primes. Define
$$
\rF_{p,\alpha} := \{ \, \Q_p(\alpha_i) \,\, | \,\, 1 \leq i \leq d \,\},
$$ 
as the set of \textit{distinct}  finite extensions of $\Q_p$ generated by the Galois conjugates of $\alpha$. For each $K \in \rF_{p, \alpha}$, denote by $e_K$ and $f_K$ the ramification index and inertia degree of $K$, respectively, $n_K=[K:\Q_p]$ and $\F_{q_K}$ the residue field. 

\begin{theorem}\label{Main Theorem 3}
Let $\alpha \in \overline{\Q} \setminus \mu_{\infty}$ be a non-zero algebraic integer. Then, 
\begin{equation*}
 h(\alpha) \geq \frac{1}{2} \, \, \mathlarger{\mathlarger{\sum}}_p \,\, \operatorname{max} \left\{ 0, \,\, \left( \frac{1}{\sum\limits_{K \in \mathcal{Q}_{p, \alpha}} e_K \, q_K}  - \frac{1}{d^2} \sum_{K\in \mathcal{Q}_{p, \alpha}}\,\, \frac{| \mathcal{S}_K(\alpha) |}{e_K} \right) \right\} \log p \,  - \frac{\log d}{2d}.
\end{equation*}
\end{theorem}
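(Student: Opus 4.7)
The strategy is to relate $h(\alpha)$ to $\log|\mathrm{disc}(P)|$ via Mahler's inequality and then lower-bound the $p$-adic valuation of the discriminant through a pigeonhole argument on residues in each local field $K \in \mathcal{Q}_{p,\alpha}$. Let $P(x) \in \Z[x]$ be the monic minimal polynomial of $\alpha$, so $\log M(P) = dh(\alpha)$. Mahler's inequality $|\mathrm{disc}(P)| \leq d^d M(P)^{2d-2}$, together with the trivial bound $h(\alpha) \geq 0$, yields $\log|\mathrm{disc}(P)| \leq d\log d + 2d^2 h(\alpha)$. Decomposing $\log|\mathrm{disc}(P)| = \sum_p v_p(\mathrm{disc}(P))\log p$ (valid since $\mathrm{disc}(P) \in \Z \setminus \{0\}$) and using $v_p(\mathrm{disc}(P)) = 2\sum_{i<j} v_p(\alpha_i - \alpha_j)$, where $v_p$ denotes the $p$-adic valuation on $\overline{\Q_p}$ normalized by $v_p(p) = 1$, one arrives at
\[
h(\alpha) \;\geq\; \frac{1}{d^2}\sum_p \log p \sum_{i<j} v_p(\alpha_i - \alpha_j) \;-\; \frac{\log d}{2d}.
\]

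For each prime $p$, I would lower-bound the inner sum by partitioning the conjugates according to $\mathcal{T}_K := \{\alpha_i : \Q_p(\alpha_i) = K\}$, $K \in \mathcal{Q}_{p,\alpha}$. Setting $m_K := |\mathcal{T}_K|$, we have $\sum_K m_K = d$ and $m_K \leq |\mathcal{S}_K(\alpha)|$. Since $\alpha$ is an algebraic integer, every $\alpha_i \in \mathcal{T}_K$ lies in $\mathcal{O}_K$ and has a well-defined residue in $\F_{q_K}$. Jensen's inequality applied to the convex function $x \mapsto \binom{x}{2}$ produces at least $m_K(m_K - q_K)/(2 q_K)$ pairs within $\mathcal{T}_K$ with coinciding residue, and each such pair contributes $v_p(\alpha_i - \alpha_j) \geq 1/e_K$. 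Summing only these within-$\mathcal{T}_K$ contributions (disjoint across $K$, hence no double-counting) gives
\[
\sum_{i<j} v_p(\alpha_i - \alpha_j) \;\geq\; \sum_{K \in \mathcal{Q}_{p,\alpha}} \max\!\left\{0,\; \frac{m_K(m_K - q_K)}{2\,e_K q_K}\right\}.
\]

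To finish, I would apply Cauchy-Schwarz in the form $\sum_K m_K^2/(e_K q_K) \geq d^2/\sum_K e_K q_K$ (with weights $e_K q_K$), use $m_K \leq |\mathcal{S}_K(\alpha)|$ on the linear piece, and invoke the superadditivity $\sum_K \max\{0, x_K\} \geq \max\{0, \sum_K x_K\}$ to collapse the bound to
\[
\sum_{i<j} v_p(\alpha_i - \alpha_j) \;\geq\; \frac{1}{2}\max\!\left\{0,\; \frac{d^2}{\sum_K e_K q_K} - \sum_K \frac{|\mathcal{S}_K(\alpha)|}{e_K}\right\}.
\]
Substituting this into the first display yields the stated inequality.

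\textbf{Main obstacle.} The principal technical care is orchestrating the Cauchy-Schwarz estimate and the relaxation $m_K \leq |\mathcal{S}_K(\alpha)|$ so that both are performed \emph{before} forming the outer $\max\{0,\cdot\}$; superadditivity then combines the per-$K$ contributions into a single clean bound with the Cauchy-Schwarz denominator $\sum_K e_K q_K$ matching exactly the form in the theorem. A secondary and routine point is the weakening $2d(d-1)h(\alpha) \leq 2d^2 h(\alpha)$ in Mahler's bound, which is automatic from $h(\alpha) \geq 0$ and is precisely what trades the cleaner denominator $d^2$ against the error $\frac{\log d}{2d}$ appearing in the statement.
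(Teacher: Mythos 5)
Your proposal is correct, and its skeleton is the same as the paper's: Mahler's inequality to pass from $h(\alpha)$ to $\log|D(m_\alpha)|$, a prime-by-prime lower bound on $\operatorname{ord}_p(D(m_\alpha))$ via pigeonhole on residue classes in each local field of $\mathcal{Q}_{p,\alpha}$, and two applications of Cauchy--Schwarz (first over residue classes, then over the fields $K$ with weights $e_Kq_K$), followed by the same bookkeeping of the $\max\{0,\cdot\}$ and the $-\frac{\log d}{2d}$ error term.

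The one substantive point where you diverge is the counting set: you partition the conjugates by the exact field they generate, $\mathcal{T}_K=\{\alpha_i:\Q_p(\alpha_i)=K\}$, whereas the paper works with $\mathcal{S}_K(\alpha)=\{\alpha_i:\alpha_i\in K\}$ directly in the inequality \eqref{disc-inequality}. Your choice is the cleaner one: since the $\mathcal{T}_K$ are pairwise disjoint, each congruent pair is attributed to exactly one field, so the bound $\sum_{i<j}v_p(\alpha_i-\alpha_j)\geq\sum_K\max\{0,\,m_K(m_K-q_K)/(2e_Kq_K)\}$ needs no discussion of pairs lying in several nested members of $\mathcal{Q}_{p,\alpha}$, which is a point the paper's summation over all $K\in\mathcal{Q}_{p,\alpha}$ with the sets $\mathcal{S}_K(\alpha)$ passes over silently (a pair contained in more than two nested fields would otherwise be over-weighted). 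You then recover the stated inequality exactly because Cauchy--Schwarz in the quadratic term only needs $\sum_K m_K=d$, while the relaxation $m_K\leq|\mathcal{S}_K(\alpha)|$ in the subtracted linear term only weakens the bound toward the form in the theorem; the superadditivity $\sum_K\max\{0,x_K\}\geq\max\{0,\sum_K x_K\}$ and monotonicity of $\max\{0,\cdot\}$ legitimize the order of operations you describe. So your argument proves the theorem as stated, and in the key counting step it is in fact slightly more careful than the paper's own write-up.
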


\noindent
\textbf{Remark.} For a totally $p-$adic algebraic integer, not a root of unity, Theorem \ref{Main Theorem 3} implies that
$$
h(\alpha) \geq  \frac{\log p}{2p} \operatorname{max} \left\{ 0, \,\, \left( 1 - \frac{p}{d} \right) \right\}- \frac{\log d}{2d}.
$$
Hence, by Northcott's theorem (Theorem \ref{Northcott}) 
$$
\liminf_{\alpha \in \mathcal{O}_{\Q^{tp}}}h(\alpha) \geq \frac{\log p}{2p},
$$
where $\Q^{tp}$ is the maximal totally $p$-adic field over $\Q$ and $\mathcal{O}_{\Q^{tp}}$ its ring of integers. Hence, we obtain a weaker version of Pottmeyer's result.

\medskip
It is important to note that our method can be used to extend Theorem \ref{Main Theorem 3} to algebraic numbers at the cost of a slightly weaker lower bound. Since Lehmer's conjecture is the theme of this article, we restrict ourselves to algebraic integers, as Lehmer's conjecture trivially holds for algebraic non-integers.

\medskip

As a consequence of the above theorem, we obtain the following corollary.

\begin{corollary}\label{corollary 1}
  Let $\alpha \in \overline{\Q} \setminus \mu_{\infty}$ be a non-zero algebraic integer. Then
   $$
  h(\alpha) \geq \frac{1}{2} \mathlarger{\mathlarger{\sum}}_p \operatorname{max} \left\{0, \,\, \frac{1}{d^2}\sum_{ \substack{ K \in \mathcal{Q}_{p, \alpha}}} \left( \frac{{| \mathcal{S}_K(\alpha) |}^2}{q_K} - | \mathcal{S}_K(\alpha) |  \right) \, \, \frac{\log q_K}{n_K} \right\}- \frac{\log d}{2d}.
   $$
\end{corollary}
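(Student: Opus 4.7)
The plan is to rerun the Mahler-measure / pigeonhole argument underlying Theorem~\ref{Main Theorem 3}, but with a sharper per-field pair count that keeps track of residue collisions inside each local field $K\in\mathcal{Q}_{p,\alpha}$.

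First I would invoke the standard Mahler-type estimate $|\operatorname{disc}(f)|_\infty\leq d^d M(f)^{2(d-1)}$ applied to the (monic) minimal polynomial $f(x)=\prod_{i=1}^d(x-\alpha_i)\in\Z[x]$, together with $\log M(f)=d\,h(\alpha)$, to obtain
\[
h(\alpha)\;\geq\;\frac{1}{2d^{2}}\log|\operatorname{disc}(f)|_\infty\;-\;\frac{\log d}{2d}.
\]
Since $\alpha$ is an algebraic integer, $\operatorname{disc}(f)\in\Z\setminus\{0\}$, so the product formula yields $\log|\operatorname{disc}(f)|_\infty=\sum_p v_p(\operatorname{disc}(f))\log p$; the identity $\operatorname{disc}(f)=\pm\prod_{i<j}(\alpha_i-\alpha_j)^{2}$ then gives $v_p(\operatorname{disc}(f))=2\sum_{i<j}v_p(\alpha_i-\alpha_j)$.

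Next, for each prime $p$ and each $K\in\mathcal{Q}_{p,\alpha}$, I would pigeonhole the $|\mathcal{S}_K(\alpha)|$ conjugates of $\alpha$ lying in $\mathcal{O}_K$ into the $q_K$ residue classes modulo a uniformiser $\pi_K$. Cauchy--Schwarz applied to the residue-class sizes produces at least $\tfrac12\!\left(|\mathcal{S}_K(\alpha)|^{2}/q_K-|\mathcal{S}_K(\alpha)|\right)$ pairs $i<j$ with $v_K(\alpha_i-\alpha_j)\geq 1$, and since $v_p=v_K/e_K$ on $K$, each such pair contributes at least $1/e_K$ to $\sum_{i<j}v_p(\alpha_i-\alpha_j)$.

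The key technical step is to aggregate these per-$K$ estimates into the single global lower bound
\[
\sum_{i<j}v_p(\alpha_i-\alpha_j)\;\geq\;\frac{1}{2}\sum_{K\in\mathcal{Q}_{p,\alpha}}\frac{1}{e_K}\!\left(\frac{|\mathcal{S}_K(\alpha)|^{2}}{q_K}-|\mathcal{S}_K(\alpha)|\right)
\]
without double-counting the pairs $(\alpha_i,\alpha_j)$ that lie in several of the fields in $\mathcal{Q}_{p,\alpha}$ simultaneously. I expect this combinatorial bookkeeping --- carried out either by partitioning the pairs according to the smallest $K\in\mathcal{Q}_{p,\alpha}$ containing both conjugates, or by an inclusion--exclusion along the tower of fields --- to be the main obstacle. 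Once the identity is in hand, inserting it into the discriminant inequality, rewriting $\log p/e_K=\log q_K/n_K$, and using $h(\alpha)\geq 0$ to install the outer $\max\{0,\cdot\}$, recovers the bound of the corollary.
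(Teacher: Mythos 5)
Your outline follows the same route as the paper: Mahler's inequality (Theorem \ref{mahler}) for the monic minimal polynomial, the identity $\operatorname{ord}_p(D(m_\alpha))=2\sum_{i<j}v_p(\alpha_i-\alpha_j)$, a residue-class pigeonhole in each $K\in\mathcal{Q}_{p,\alpha}$, and Cauchy--Schwarz giving $\sum_{x\in\F_{q_K}}N_{K,x}(N_{K,x}-1)\ge |\mathcal{S}_K(\alpha)|^2/q_K-|\mathcal{S}_K(\alpha)|$. However, the step you yourself flag as ``the main obstacle'' --- aggregating the per-$K$ counts into the displayed lower bound for $\sum_{i<j}v_p(\alpha_i-\alpha_j)$ --- is left unproved, and neither of your suggested fixes yields the inequality as you wrote it. If you attribute each pair to the smallest field of $\mathcal{Q}_{p,\alpha}$ containing both conjugates, or run an inclusion--exclusion over nested fields, then the number of pairs credited to a given $K$ is in general smaller than $\tfrac12\bigl(|\mathcal{S}_K(\alpha)|^2/q_K-|\mathcal{S}_K(\alpha)|\bigr)$ when $\mathcal{S}_K(\alpha)$ is read literally as ``all conjugates lying in $K$'', since pairs already contained in a subfield are removed from $K$'s count; so the key aggregated inequality, in the overlapping form you state it, remains unjustified.

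The resolution, and the way the paper's argument is to be read, is that for $K\in\mathcal{Q}_{p,\alpha}$ the sets $\mathcal{S}_K(\alpha)$ are used as the \emph{partition} of the conjugates according to the exact field $\Q_p(\alpha_i)=K$ they generate; this is precisely what makes the identity $d=\sum_{K\in\mathcal{Q}_{p,\alpha}}|\mathcal{S}_K(\alpha)|$ in the proof of Theorem \ref{Main Theorem 3} valid. With that convention the same-residue pairs counted for distinct $K$ involve pairwise disjoint sets of conjugates, each such pair contributes at least $1/e_K$ to $v_p(\alpha_i-\alpha_j)$, and every remaining pairwise valuation is $\ge 0$ because $\alpha$ is an algebraic integer; the aggregation is then immediate, with no combinatorial bookkeeping, and one obtains \eqref{disc-inequality} with $\log p/e_K=\log q_K/n_K$. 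A further small repair: the outer $\max\{0,\cdot\}$ for each prime does not come from $h(\alpha)\ge 0$, but from the nonnegativity of $\sum_x N_{K,x}(N_{K,x}-1)$ (equivalently of $\operatorname{ord}_p(D(m_\alpha))$), which allows you to replace each prime's Cauchy--Schwarz bound by its positive part \emph{before} summing over $p$. With these two corrections your plan coincides with the paper's proof of the corollary.
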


In other words, if conjugates of $\alpha$ lie in several local fields $K$ with small inertia degree, then we obtain a meaningful lower bound for $h(\alpha)$. This can be compared with a theorem of Mignotte \cite{Mignotte 1}, which states that if there exists a prime $p \leq d \log d$, which splits completely in $\Q(\alpha)$, then Lehmer's conjecture holds for such $\alpha$. An improvement due to Silverman (alluded to in the literature but unpublished) states that if there are $d$ distinct prime ideals in $\Q(\alpha)$ with norm $\leq \sqrt{d \log d}$, then $h(\alpha)\geq c/d$ for some $c>0$. Indeed, if there are several primes with small norms in $F=\Q(\alpha)$, by Weil's explicit formula, one can obtain a sharper bound on the absolute discriminant $|\Delta(F/\Q)|$. For $\alpha$ an algebraic integer, we have $\Delta(F/\Q)$ divides $ D(m_{\alpha})$, the discriminant of the minimal polynomial of $\alpha$. Thus, Mahler's inequality (Theorem \ref{mahler}) can be applied and one obtains a lower bound on $h(\alpha)$. This phenomenon is explicitly demonstrated by the authors in \cite{Dixit-Kala-2}.

\bigskip

\section{\bf Preliminaries}\label{Pre}
\medskip

In this section, we recall some definitions and results which will be used towards the proof of our theorems. Let $\alpha \in \Q$ be a non-zero algebraic number of degree $d$ and $m_\alpha(x) = a_d x^d + \cdots + a_1 x + a_0 \in \Z[x]$ be the minimal polynomial of $\alpha$. Then, its Mahler measure is defined as
\begin{equation*}
    M(\alpha) := |a_n| \prod_{i} \max(1,|\alpha_i|),
\end{equation*}
where $\alpha_i$'s denote the Galois conjugates of $\alpha$. The logarithmic Weil height of $\alpha$, in terms of the Mahler measure, is thus defined as
\begin{equation*}\label{height}
    h(\alpha) = \frac{\log M(\alpha)}{d} \,.
\end{equation*}\\
The Weil height gives a partial ordering on algebraic numbers with bounded degree. This follows from a classical result of Northcott \cite{Northcott}.

\begin{theorem}[Northcott]\label{Northcott}
   For any fixed positive integers $d$ and 
$H$, there are only finitely many algebraic numbers $\alpha \in \overline{\Q}$ of degree at most $d$ and height at most $H$.
\end{theorem}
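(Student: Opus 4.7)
The plan is to associate to each such $\alpha$ its minimal polynomial over $\Z$, bound the coefficients of this polynomial in terms of the Mahler measure, and then conclude finiteness from the fact that only finitely many integer polynomials of bounded degree with bounded coefficients exist. First, I would observe that each algebraic number $\alpha$ of degree $k \leq d$ is a root of a unique primitive minimal polynomial $m_\alpha \in \Z[x]$ with positive leading coefficient, and that distinct $\alpha$ can share the same minimal polynomial only when they are Galois conjugates; hence any given polynomial accounts for at most $d$ algebraic numbers. So it suffices to count minimal polynomials in $\Z[x]$ of degree at most $d$ whose Mahler measure is bounded. The hypothesis $h(\alpha) \leq H$ together with the identity $h(\alpha) = \log M(\alpha)/\deg(\alpha)$ forces $M(\alpha) \leq e^{dH}$.

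The central step is to bound each coefficient $a_j$ of $m_\alpha(x) = a_k \prod_{i=1}^{k}(x-\alpha_i)$ by expressing it via elementary symmetric functions of the roots. Writing $a_j = (-1)^{k-j} a_k \, e_{k-j}(\alpha_1, \ldots, \alpha_k)$ and using that each of the $\binom{k}{k-j}$ monomials appearing in $e_{k-j}$ is dominated in absolute value by $\prod_{i=1}^{k} \max(1, |\alpha_i|)$, I would obtain
\begin{equation*}
|a_j| \;\leq\; |a_k|\binom{k}{j}\prod_{i=1}^{k}\max(1,|\alpha_i|) \;=\; \binom{k}{j} M(\alpha) \;\leq\; 2^d e^{dH}.
\end{equation*}

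Since every coefficient of $m_\alpha$ is then an integer with absolute value at most $2^d e^{dH}$ and the degree is at most $d$, the total number of candidate polynomials is at most $(2 \cdot 2^d e^{dH}+1)^{d+1}$, which is finite. Combined with the earlier observation that each polynomial contributes at most $d$ algebraic numbers, this yields the desired finiteness. There is no serious obstacle in this argument, which is entirely classical; the only substantive ingredient is the symmetric-function estimate above, and this bound is precisely what makes the Mahler measure the natural height-like invariant on which Northcott-type finiteness statements rest.
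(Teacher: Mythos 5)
Your argument is correct and complete: the symmetric-function bound $|a_j| \leq \binom{k}{j} M(\alpha) \leq 2^d e^{dH}$ on the coefficients of the minimal polynomial, combined with $M(\alpha) \leq e^{dH}$ from $h(\alpha) = \log M(\alpha)/\deg(\alpha)$, reduces the statement to counting integer polynomials of bounded degree and bounded coefficients, each contributing at most $d$ roots. The paper itself does not prove this theorem — it is quoted from Northcott's 1949 paper as a classical ingredient — and your proof is precisely the standard argument for it, so there is nothing to reconcile.
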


\bigskip

Recall that the discriminant of a polynomial $f(x) = b_n x^n + b_{n-1}x^{n-1} + \ldots + b_0 \in \Z[x]$ is an integer given by
$$
D(f) = {b_n}^{2n-2} \prod_{i > j} (\beta_i - \beta_j)^2,
$$
where $\beta_i$'s are all roots of $f$ in $\C$. To connect the Weil height of an algebraic number with the discriminant of its minimal polynomial, we shall use Mahler's inequality \cite{Mahler}. 

\begin{theorem}[Mahler]\label{mahler}
    Let $\alpha$ be a non-zero algebraic number of degree $d$ with minimal polynomial $m_\alpha(x)$ in $\Z[x]$. Then, 
    $$
        |D(m_{\alpha})| \leq d^d M(\alpha)^{2d-2}.
    $$
    
\end{theorem}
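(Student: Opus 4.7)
The plan is to express the discriminant as the square of a Vandermonde determinant formed by the Galois conjugates of $\alpha$, and then apply Hadamard's inequality in a form that produces the Mahler measure on the nose. Writing $m_\alpha(x) = a_d \prod_{i=1}^d (x - \alpha_i) \in \Z[x]$, the standard factorization
$$
D(m_\alpha) \,=\, a_d^{2d-2} \prod_{i<j}(\alpha_i - \alpha_j)^2
$$
can be rewritten as $D(m_\alpha) = a_d^{2d-2}\, (\det V)^2$, where $V = (\alpha_i^{j-1})_{1 \leq i,j \leq d}$ is the Vandermonde matrix of the conjugates. This converts the problem from bounding a product of pairwise differences into bounding a single determinant.

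The main step is to apply Hadamard's inequality row-wise to $V$, which gives
$$
|\det V|^2 \,\leq\, \prod_{i=1}^d \sum_{j=1}^d |\alpha_i|^{2(j-1)}.
$$
I would then handle each inner sum by a case split: if $|\alpha_i| \leq 1$, the geometric sum is at most $d$; if $|\alpha_i| > 1$, each of the $d$ terms is at most $|\alpha_i|^{2(d-1)}$, so the sum is at most $d\,|\alpha_i|^{2(d-1)}$. Both cases combine into the uniform estimate $\sum_{j=1}^d |\alpha_i|^{2(j-1)} \leq d\, \max(1, |\alpha_i|)^{2(d-1)}$, yielding $|\det V|^2 \leq d^d \prod_i \max(1, |\alpha_i|)^{2(d-1)}$. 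Multiplying by $|a_d|^{2d-2}$ and recognising $M(\alpha)^{2d-2} = |a_d|^{2d-2} \prod_i \max(1, |\alpha_i|)^{2d-2}$ then delivers $|D(m_\alpha)| \leq d^d M(\alpha)^{2d-2}$, as claimed.

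The only subtlety --- and what I expect to be the main conceptual choice --- is selecting the right linear-algebraic input at the outset. A direct pairwise estimate such as $|\alpha_i - \alpha_j| \leq 2 \max(1, |\alpha_i|) \max(1, |\alpha_j|)$ would give only a weaker constant of the form $4^{d(d-1)/2}$ in place of $d^d$. Routing the argument through the Vandermonde matrix plus Hadamard's inequality is precisely what trades this exponential-in-$d^2$ constant for the polynomial-in-$d$ factor $d^d$ demanded by the statement, and it is the step on which the sharpness of the bound hinges.
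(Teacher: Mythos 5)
Your proof is correct and is essentially Mahler's original argument, which the paper simply cites (Theorem \ref{mahler} is quoted from Mahler's 1964 paper rather than reproved): writing $D(m_\alpha)=a_d^{2d-2}(\det V)^2$ for the Vandermonde matrix of the conjugates, applying Hadamard's inequality row-wise, and bounding each row norm by $d\,\max(1,|\alpha_i|)^{2(d-1)}$ gives exactly the stated bound $|D(m_\alpha)|\leq d^d M(\alpha)^{2d-2}$. All steps check out, including the case split on $|\alpha_i|\leq 1$ versus $|\alpha_i|>1$ and the final recombination with the leading coefficient into $M(\alpha)^{2d-2}$.
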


\medskip
\noindent
Therefore, using the definition of Weil height, we obtain

\begin{equation}
    h(\alpha) \geq \frac{1}{2} \left( \frac{\log|D(m_{\alpha})|}{d^2} - \frac{\log d}{d} \right).
\end{equation}

\bigskip

\section{\bf Proof of the main theorems}

\medskip

Let $K$ be a finite extension of $\Q_p$ with ramification index $e$ and inertia degree $f$. Let $\F_q$ denote the residue field of $K$, where $q = p^f$. We define $\nu$ to be the unique valuation on $K$ extending the usual $p$-adic valuation on $\Q_p$. Let $\alpha \in \overline{\Q} \setminus \mu_\infty$ be a non-zero algebraic number of degree $d$, and $\alpha_i$'s be the Galois conjugates of $\alpha$ in $\overline{\Q}$. Recall that
\begin{equation*}
\mathcal{S}_K(\alpha) = \left\{ \,\, \alpha_i\,\, | \,\,  \alpha_i \in  K,\,\, 1\leq i\leq d\right\}.
\end{equation*}

\medskip
\noindent
 Our proof is inspired by the proof of \cite[Theorem 2]{BZ} due to Bombieri and Zannier. 


\begin{proof}[Proof of Theorem \ref{Main Theorem}]

 Let $m_\alpha(x)$ be the minimal polynomial of $\alpha$ in $\Z[x]$. Consider $L$ to be the splitting field of $m_\alpha(x)$ over $K$ and $\omega$ be the unique valuation on $L$ extending $\nu$ on $K_{\nu}$. Rearrange the conjugates $\alpha_i$ of $\alpha$ such that
    \begin{equation*}
        \omega(\alpha_1)\geq \cdots \geq \omega(\alpha_r)\geq 0 > \omega(\alpha_{r+1})\geq \cdots \geq \omega(\alpha_d).
    \end{equation*}
The discriminant of $m_\alpha(x)$ is given by
\begin{align*}
    D(m_\alpha) & = a_d^{2d-2}\prod_{i<j} (\alpha_i - \alpha_j)^2.
\end{align*}
The contribution to the valuation of $D_{m_{\alpha}}$ from the terms in the product where either \( \omega(\alpha_j) < 0 \) or \( \omega(\alpha_i) < 0 \), can be bounded by

\begin{equation*}
    \omega\left(\prod_{j=r+1}^{d} \prod_{i=1}^{j-1} (\alpha_j - \alpha_i)\right) \geq \mathlarger{\mathlarger{\sum}}_{j=r+1}^d (j-1) \, \omega(\alpha_j).
\end{equation*}
Hence, we obtain
\begin{align}\label{eqn-1}
    \omega(D(m_\alpha)) & \geq (2d-2)\omega(a_d) + 2 \, \mathlarger{\mathlarger{\sum}}_{0<i<j\leq r} \omega(\alpha_j-\alpha_i) \, +  \, 2 \, \mathlarger{\mathlarger{\sum}}_{j=r+1}^{d} (j-1) \, \omega(\alpha_j) \nonumber\\
    & \geq 2 \, \mathlarger{\mathlarger{\sum}}_{i< j \leq r} \omega(\alpha_j - \alpha_i) \, - \,  2 \, \mathlarger{\mathlarger{\sum}}_{j=r+1}^{d} (d-j) \, \omega(\alpha_j).
\end{align}
Since $\omega(\alpha_j) < 0$ for $r+1 \leq j \leq d$, and $\omega(\alpha_1), \, \omega(\alpha_2), \ldots, \, \omega(\alpha_r)  \geq 0$, omitting a few non-negative terms if necessary, we have the following inequality
\begin{align}\label{eqn-11}
     \omega(D(m_\alpha)) & \geq 2 \, \mathlarger{\mathlarger{\sum}}_{i< j \leq r} \omega(\alpha_j - \alpha_i) \, - \, 2 \, \mathlarger{\mathlarger{\sum}}_{j=r+1}^{d} (d-j) \, \omega(\alpha_j) \nonumber\\ 
     &\geq 2 \, \mathlarger{\mathlarger{\sum}}_{\substack{i< j \leq r\\\alpha_i, \, \alpha_j \in \mathcal{S}_K(\alpha)}} \nu(\alpha_j - \alpha_i) \, - \,  2 \, \mathlarger{\mathlarger{\sum}}_{\substack{r<j\leq d \\ \alpha_j \in \mathcal{S}_K(\alpha)}} (d-j) \, \nu(\alpha_j).
\end{align}
For each $x \in \F_q$, let $N_{x}$ denote the number of conjugates of $\alpha_t \in \mathcal{S}_{K}(\alpha)$ with $1 \leq t \leq r$, which lie in the residue class $x \bmod \nu$. If $\alpha_i,\alpha_j \in \mathcal{S}_{K}(\alpha)$ lie in the same residue class modulo $\nu$, then $\nu(\alpha_i-\alpha_j) \geq \frac{1}{e}$. Hence, 
\begin{equation*}
    \mathlarger{\mathlarger{\sum}}_{\substack{i< j \leq r\\ \alpha_i, \alpha_j \in \mathcal{S}_K(\alpha)}} \nu(\alpha_j - \alpha_i) \geq \frac{1}{e} \mathlarger{\mathlarger{\sum}}_{x\in \F_q} \frac{N_{x} (N_{x} -1)}{2},
\end{equation*}
where $q=p^{f}$. 

\medskip

\noindent
Let $r'$ be the number of $\nu$-integral conjugates of $\alpha$ lying in $K$. Since there are $| \mathcal{S}_K(\alpha) |$ many conjugates of $\alpha$ lying in $K$, there are exactly $| \mathcal{S}_K(\alpha) | - r'$ non-integral conjugates of $\alpha$ in $K$. Therefore the second summation in \eqref{eqn-11} runs over all $| \mathcal{S}_K(\alpha) |-r'$ non-integral conjugates of $\alpha$ in $K$. As $d-j$ are all distinct positive integers, we have 
$$
\mathlarger{\mathlarger{\sum}}_{\substack{r<j\leq d \\ \alpha_j \in \mathcal{S}_K(\alpha)}} (d-j) \, \geq \, \frac{(| \mathcal{S}_K(\alpha) |-r') (| \mathcal{S}_K(\alpha) |-r'-1)}{2}.
$$
Note that, for $r < i  \leq d$, $ \nu(\alpha_i) \leq - \frac{1}{e}$  and therefore, \eqref{eqn-11} implies that

\begin{equation*}
   \operatorname{ord}_{p}(D(m_\alpha)) \geq \frac{1}{e} \left(    (| \mathcal{S}_K(\alpha) |-r')(| \mathcal{S}_K(\alpha) |-r'-1) + \mathlarger{\mathlarger{\sum}}_{x\in \F_q} N_x (N_x -1) \right),
\end{equation*}
where $\operatorname{ord}_p(D(m_\alpha))$ is the usual $p$-adic valuation of $D(m_\alpha)$. Since 
\begin{equation*}
(| \mathcal{S}_K(\alpha) |-r') + \sum_{x \in \F_q} N_x =| \mathcal{S}_K(\alpha) |, 
\end{equation*}
applying Cauchy-Schwarz inequality, we obtain
\begin{align*}
   \operatorname{ord}_{p}(D(m_\alpha))   &\geq \frac{1}{e} \left( (| \mathcal{S}_K(\alpha) |-r') (| \mathcal{S}_K(\alpha) |-r'-1) +  \sum_{x\in \F_q} N_x (N_x -1)  \right)\\
    & = \frac{1}{e} \left(   (| \mathcal{S}_K(\alpha) |-r')^2 + \sum_{x\in \F_q} N_x^2  + O(d) \right) \\
                & \geq  \frac{1}{e} \left(   \frac{| \mathcal{S}_K(\alpha) |^2}{q+1} + O(d) \right),
\end{align*}
where the implied constant has absolute value $\leq 1$. Therefore,
\begin{align*}
    \log |D(m_\alpha)| & \geq \frac{1}{e}| \mathcal{S}_K(\alpha) |^2 \, \left(\frac{\log p}{q+1}\right) + O\left( d \frac{\log p}{e}\right)\\
    & \geq  \frac{1}{[K : \Q_p]} | \mathcal{S}_K(\alpha) |^2 \, \left(\frac{\log q}{q+1}\right) + O\left( d \, \frac{\log q}{[K : \Q_p]}\right).
\end{align*}

\noindent
Applying Mahler's inequality (Theorem \ref{mahler}), we deduce that
\begin{align}\label{lower bound on height}
    d \, h(\alpha) = \log M(\alpha) & \geq \frac{\log |D(m_\alpha)|}{2d} - \frac{\log d}{2} \nonumber \\ 
    & \geq \frac{1}{2 \, [K : \Q_p]} \left( \frac{| \mathcal{S}_K(\alpha) |^2}{d} \right) \frac{\log q}{(q+1)} + O\left( \frac{\log q} {[K : \Q_p]}\right) - \frac{\log d}{2},
\end{align}
where the implied constant in the $O$-term has absolute value $\leq 1/2$. Finally, using Northcott's Theorem \ref{Northcott} we conclude the proof of Theorem \ref{Main Theorem}.

\end{proof}

\medskip

\begin{proof}[Proof of Proposition \ref{Main Theorem 2}]
From \eqref{lower bound on height}, we deduce that if
 \begin{equation*}
      | \mathcal{S}_K(\alpha) | \geq  \sqrt{\left(\frac{2 (q+1) \, [K : \Q_p]\, d}{\log q} \right)\, \, \left( c + \frac{\log {(q d)}}{2} \right)  }
    \end{equation*}
then

\begin{equation*}
    d \, h(\alpha) \geq c + \frac{\log q}{2} + O \left( \frac{\log q}{[K: \Q_p]} \right).
\end{equation*}
Since the absolute value of the implied constant of the $O$-term is bounded above by $1/2$, we conclude that
\begin{equation*}
    h(\alpha) \geq \frac{c}{d}.
\end{equation*}
\end{proof}
\medskip

\begin{proof}[Proof of Corollary \ref{corollary I}] Taking $| \mathcal{S}_K(\alpha) | \geq (q^{3} \, [K : \Q_p]\, d\,\log d)^{1/2} $ in  \eqref{lower bound on height}, we obtain

\begin{equation}\label{equation-1}
 d \, h(\alpha) \,\,\,  \,\,\, \geq \,\,\, \left( \frac{q^3 \log q - q-1}{2(q+1)} \right) \log d + O\left( \frac{\log q} {[K : \Q_p]} \right).
 \end{equation}
Note that for any real number $x \geq 2$, 
\begin{equation*}
\frac{x^{3} \, \log x -   x-1 }{2(x+1)} \,\, \geq  \,\, \frac{2}{5}.
\end{equation*}
This is because the function in the LHS is increasing for $x\geq 2$ and the inequality holds for $x=2$. Applying this to \eqref{equation-1}, we obtain
\begin{equation*}
    d\, h(\alpha) \geq \,\,\, \frac{2}{5} \log d + O\left( \frac{\log q} {[K : \Q_p]} \right).
\end{equation*}
Therefore, as $d\to \infty$, the Mahler measure $M(\alpha) = d\,h(\alpha)$ also tends to infinity as required.
\end{proof}

\bigskip

\noindent
Now we are ready to prove Theorem \ref{Main Theorem 3}. Recall that for $\alpha \in \overline{\Q}$, 
$$
\rF_{p, \alpha} = \{ \, \Q_p(\alpha_i) \,\, | \,\, 1 \leq i \leq d \,\}.
$$
For each $K \in \mathcal{Q}_{p, \alpha}$, denote by $e_K$ and $f_K$ the ramification index and inertia degree of $K$, respectively, $n_K=[K:\Q_p]$ and $\F_{q_K}$ the residue field.  
\begin{proof}[Proof of Theorem \ref{Main Theorem 3}]
Let $\alpha\in \overline{\Q} \setminus \mu_\infty$ be an algebraic integer. For $K\in \rF_{p,\alpha}$ and $x\in \F_{q_K}$, define
\begin{equation*}
    N_{K, x} \, = \, \#\left\{\alpha_i \in \mathcal{S}_K(\alpha) \,\, : \,\, \alpha_i \equiv x \text{\, in \,} \F_{q_{K}} \right\}.
\end{equation*}
We can write

\begin{equation*}
|D(m_{\alpha})| = \prod_{p} {p}^{\operatorname{ord}_{p} (D(m_{\alpha}))}.
\end{equation*}
Now, it follows from the definition of $\mathcal{Q}_{p, \, \alpha}$ that
\begin{equation*}
    \operatorname{ord}_{p} (D(m_{\alpha})) =  \sum_p \sum_{ K \in \mathcal{Q}_{p,\alpha}} \nu_K(D(m_{\alpha})),
\end{equation*}
where $\nu_K$ is the unique valuation on $K$ extending the usual $p$-adic valuation on $\Q_p$.

\medskip

\noindent
Thus, we have
\begin{align}\label{disc-inequality}
\log |D(m_{\alpha})| & \geq  \sum_p \,\, \sum_{K \in \mathcal{Q}_{p, \alpha}} \left(\,\,\,\sum_{x \in \F_{q_K}} N_{K, x} \, ( N_{K, x} - 1) \, \,\right) \frac{\log p}{e_K} \nonumber \\ & \geq \sum_p  \log p\,\, \sum_{K\in \mathcal{Q}_{p, \alpha}} \frac{1}{e_K} \,\,\, \sum_{x \in \F_{q_K}} N_{K, x} \, ( N_{K, x} - 1).
\end{align}

\noindent
Since $ \sum_{x \in \F_{q_K}} N_{K, x} = | \mathcal{S}_K(\alpha) |$, using Cauchy-Schwarz inequality, we obtain

\begin{equation*}
 \sum_{K\in \mathcal{Q}_{p, \alpha}}\,\,  \frac{1}{e_K} \sum_{x \in \F_{q_K}} N_{K, x}^2  \, \, \geq \,\, \sum_{K\in \mathcal{Q}_{p, \alpha}}\,\,  \frac{1}{e_K \, q_K} | \mathcal{S}_K(\alpha) |^2.
\end{equation*}

\medskip
\noindent
As
$$
d = \sum_{K\in \mathcal{Q}_{p, \alpha}} | \mathcal{S}_K(\alpha) |, 
$$
\noindent
using Cauchy-Schwarz inequality, we get 
$$
\sum_{K\in \mathcal{Q}_{p, \alpha}} \,\,  \frac{1}{e_K q_K} | \mathcal{S}_K(\alpha) |^2 \,\,  \geq \,\, \frac{d^2}{\sum\limits_{K\in \mathcal{Q}_{p, \alpha}} e_K \, q_K}
$$

\noindent
and hence,
\begin{equation}\label{A}
     \sum_{K\in \mathcal{Q}_{p, \alpha}}\,\,  \frac{1}{e_K} \sum_{x \in \F_{q_K}} N_{K, x}^2  \, \, \geq \,\, \frac{d^2}{\sum\limits_{K\in \mathcal{Q}_{p, \alpha}} e_K \, q_K}. 
\end{equation}

\medskip
\noindent
On the other hand
\begin{equation}\label{B}
      \sum_{K\in \mathcal{Q}_{p, \alpha}}\,\,  \frac{1}{e_K} \sum_{x \in \F_{q_K}} N_{K, x}  \, \, = \,\,  \sum\limits_{K\in \mathcal{Q}_{p, \alpha}}\,\, \frac{| \mathcal{S}_K(\alpha) |}{e_K}. 
\end{equation}

\medskip

\noindent
Combining \eqref{A} and \eqref{B} in \eqref{disc-inequality}, we get
\begin{align*}
\log |D(m_\alpha)| \, \, \geq \, \, d^2 \, \, \mathlarger{\mathlarger{\sum}}_p \,\, \left( \frac{1}{\sum\limits_{K \in \mathcal{Q}_{p, \alpha}} e_K \, q_K}  - \frac{1}{d^2} \sum_{K\in \mathcal{Q}_{p, \alpha}}\,\, \frac{| \mathcal{S}_K(\alpha) |}{e_K} \right) \log p.
\end{align*}
\noindent
Finally, applying Mahler's inequality (Theorem \ref{mahler}), we conclude the theorem.
\end{proof}

\medskip

\begin{proof}[Proof of Corollary \ref{corollary 1}]
By \eqref{disc-inequality}, we have the inequality 
\begin{equation*}
\log |D(m_{\alpha})| \geq \sum_p \,\, \sum_{K\in \mathcal{Q}_{p, \alpha}} \,\,\, \sum_{x \in \F_{q_K}} N_{K, x} \, ( N_{K, x} - 1) \, \, \frac{\log q_K}{n_K}.
\end{equation*}

\noindent
For $K \in \mathcal{Q}_{p, \alpha}$, since $\sum\limits_{x \in \mathbb{F}_{q_K}} N_{K, x} = | \mathcal{S}_K(\alpha) | $ is the total number of conjugates of $\alpha \in K$, using Cauchy-Schwarz inequality, we obtain
\begin{align*}
  \log |D(m_{\alpha})| \geq  \sum_{K \in \mathcal{Q}_{p, \alpha}}\,\,\, \sum_{x \in \F_{q_K}} N_{K, x} \, ( N_{K, x} - 1) \, \, \frac{\log q_K}{n_K} &=  \sum_{K \in \mathcal{Q}_{p, \alpha}} \, \frac{\log q_K}{n_K} \, \sum_{x \in \mathbb{F}_{q_K}}  N_{K, x} \, ( N_{K, x} - 1) \\
    &\geq \sum_{ \substack{ K \in \mathcal{Q}_{p, \alpha}}} \left( \frac{| \mathcal{S}_K(\alpha) |^2}{q_K} - | \mathcal{S}_K(\alpha) | \right) \, \, \frac{\log q_K}{n_K}.
\end{align*}
The proof now follows by applying Mahler's inequality (Theorem \ref{mahler}).
\end{proof}

\medskip




\end{document}